\newcommand{\bburl}[1]{\textcolor{blue}{\url{#1}}}
\newcommand{\monthyear}[1]{%
  \def\@monthyear{\uppercase{#1}}}
\newcommand{\volnumber}[1]{%
  \def\@volnumber{\uppercase{#1}}}
\def\ps@plain{\ps@empty
  \def\@oddfoot{\@monthyear \hfil \thepage}%
  \def\@evenfoot{\thepage \hfil \@volnumber}}
\def\ps@firstpage{\ps@plain}
\def\ps@headings{\ps@empty
  \def\@evenhead{%
    \setTrue{runhead}%
    \def\thanks{\protect\thanks@warning}%
    \uppercase{\ }\hfil}%
  \def\@oddhead{%
    \setTrue{runhead}%
    \def\thanks{\protect\thanks@warning}%
    \hfill\uppercase{On Zeckendorf Related Partitions Using the Lucas Sequence}}%
  \let\@mkboth\markboth
  \def\@evenfoot{%
    \thepage \hfil \@volnumber}%
  \def\@oddfoot{%
    \@monthyear \hfil \thepage}%
  }%
\theoremstyle{plain}
\numberwithin{equation}{section}
\newtheorem{thm}{Theorem}[section]
\newcommand{\ignore}[1]{}
\newcommand\be{\begin{eqnarray}}
\newcommand\ee{\end{eqnarray}}
\newcommand\bea{\begin{eqnarray}}
\newcommand\eea{\end{eqnarray}}
\newcommand\ben{\begin{enumerate}}
\newcommand\een{\end{enumerate}}
\newtheorem{lem}[thm]{Lemma}
\newtheorem{defi}[thm]{Definition}
\newtheorem{rek}[thm]{Remark}
\begin{document}

\monthyear{2021}
\volnumber{}
\setcounter{page}{1}
\title{On Zeckendorf Related Partitions Using the Lucas Sequence}

\author{H\`ung Vi\d{\^e}t Chu, David C. Luo, and Steven J. Miller}

\address{Department of Mathematics, University of Illinois at Urbana-Champaign, Urbana, IL 61820} \email{hungchu2@illinois.edu}

\address{Department of Mathematics, Emory University, 400 Dowman Dr., Atlanta, GA 30322} \email{dluo6745@gmail.com}

\address{Department of Mathematics and Statistics, Williams College, Williamstown, MA 01267} \email{sjm1@williams.edu}

\date{\today}

\begin{abstract}
Zeckendorf proved that every natural number has a unique partition as a sum of non-consecutive Fibonacci numbers. Similarly, every natural number can be partitioned into a sum of non-consecutive terms of the Lucas sequence, although such partitions need not be unique. In this paper, we 
\begin{itemize}
    \item [(1)] prove that a natural number can have at most two distinct non-consecutive partitions in the Lucas sequence,
    \item [(2)] find all natural numbers with a fixed term in their partition, and
    \item [(3)] calculate the limiting value of the proportion of natural numbers that are not uniquely partitioned into the sum of non-consecutive terms in the Lucas sequence. 
\end{itemize}
\end{abstract}

\thanks{The authors thank Curtis D.~Herink and David Zureick-Brown for helpful conversations, Jeffrey Shallit for pointing out a gap in reasoning in an earlier version, the anonymous referee for useful comments, and Elvin Gu for coding support. The third author was supported by NSF grants DMS1561945.}

\maketitle
\section{Introduction} 
The Fibonacci numbers have fascinated mathematicians for centuries with many interesting properties. By convention, the Fibonacci sequence $\left\{F_n\right\}_{n=0}^{\infty}$ is defined as follows: let $F_0 = 0$, $F_1 = 1$, and $F_n = F_{n-1} + F_{n-2}$, for $n\ge 2$.
A beautiful theorem of Zeckendorf \cite{Z} states that every natural number $n$ can be uniquely written as a sum of non-consecutive Fibonacci numbers. This gives the so-called Zeckendorf partition of $n$. A formal statement of Zeckendorf's theorem is as follows:
\begin{thm}[Zeckendorf]\label{p1}
For any $n\in\mathbb{N}$, there exists a unique increasing sequence of natural numbers $\{c_1, c_2, \ldots, c_k\}$ such that $c_1\ge 2$, $c_i\ge c_{i-1}+2$ for $i = 2, 3, \ldots, k$, and $n = \sum_{i=1}^kF_{c_i}$. 
\end{thm}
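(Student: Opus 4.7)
My plan is to prove existence and uniqueness separately, each by induction on $n$.

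For existence, I would proceed by strong induction. The base case $n=1$ is handled by taking $c_1=2$, since $F_2=1$. For $n\geq 2$, let $m$ be the largest index with $F_m \leq n$. By maximality of $m$, we have $n < F_{m+1} = F_m + F_{m-1}$, hence $0 \leq n - F_m < F_{m-1}$. If $n = F_m$ the representation is just $\{m\}$; otherwise the inductive hypothesis yields a non-consecutive representation of $n - F_m$, and since $n - F_m < F_{m-1}$, its largest index $c$ satisfies $F_c < F_{m-1}$ and so $c \leq m-2$. Appending $F_m$ therefore preserves the gap condition and gives the desired representation of $n$.

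For uniqueness, I would argue by contradiction. Suppose $n$ admits two distinct valid representations; cancel all Fibonacci numbers common to both. The resulting sums are still equal and nonempty (if one side became empty, both would, contradicting distinctness), with disjoint index sets. Let $F_a$ and $F_b$ denote the largest surviving terms on each side; disjointness forces $a \neq b$, so without loss of generality $a > b$, giving $\text{LHS} \geq F_a$. The key inequality I need is that any sum satisfying the gap condition with largest index $b$ is strictly less than $F_{b+1}$. This follows from the telescoping identities
\[F_2 + F_4 + \cdots + F_{2k} \;=\; F_{2k+1} - 1, \qquad F_3 + F_5 + \cdots + F_{2k+1} \;=\; F_{2k+2} - 1,\]
which bound the maximum non-consecutive sum with largest index $b$ by $F_{b+1}-1$. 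Each identity is verified by an immediate induction on $k$ using $F_{j+1} = F_j + F_{j-1}$. Combining, $\text{RHS} \leq F_{b+1} - 1 < F_{b+1} \leq F_a \leq \text{LHS}$, a contradiction.

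The principal obstacle is precisely this uniqueness bound: the gap condition $c_i \geq c_{i-1}+2$ must provide exactly the right slack so that no valid sum with largest term $F_b$ can reach $F_{b+1}$. Once the telescoping identities above are in hand, the contradiction is immediate, and the remaining existence argument is a routine greedy induction.
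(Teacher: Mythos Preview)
Your proof is correct and is the standard greedy-plus-bounding argument for Zeckendorf's theorem. Both the existence step (greedy choice of the largest $F_m \le n$, with the inequality $n - F_m < F_{m-1}$ forcing the gap) and the uniqueness step (cancel common summands, then use the bound that any non-consecutive sum with largest index $b$ is at most $F_{b+1}-1$) are carried out correctly. One small remark: after cancelling common terms, you should note that the surviving indices on each side still satisfy the gap condition $c_i \ge c_{i-1}+2$ (removing terms can only increase gaps), so your telescoping bound applies; this is implicit in what you wrote but worth making explicit.

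As for comparison with the paper: the paper does not actually prove Theorem~\ref{p1}. It is stated as Zeckendorf's classical result and attributed to \cite{Z}, serving only as background for the Lucas-sequence results that follow. So there is no alternative argument in the paper to contrast with yours; your proof simply supplies what the paper takes as known.
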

Much work has been done to understand the structure of Zeckendorf partitions and their applications (see \cite{BDEMMTW1, BDEMMTW2, B, CSH, Fr, MG1, MG2, HS, L, MW1, MW2}) and to generalize them (see \cite{Chu, Day, DDKMMV, DFFHMPP, FGNPT, GTNP, Ho, K, ML, MMMS, MMMMS}). In this paper, we study the partition of natural numbers into Lucas numbers. The Lucas sequence $\left\{L_n\right\}_{n=0}^{\infty}$ is defined as follows: let $L_0 = 2$, $L_1 = 1$, and $L_n = L_{n-1} + L_{n-2}$, for $n\ge 2$. As the Lucas sequence is closely related to the Fibonacci sequence, it is not surprising that we can also partition natural numbers using Lucas numbers. 

\begin{thm}[Zeckendorf]\label{p2}
    Every natural number can be partitioned into the sum of non-consecutive terms of the Lucas sequence.
\end{thm}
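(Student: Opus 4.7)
My plan is to argue by strong induction on $n$ using the greedy algorithm: repeatedly subtract the largest Lucas number not exceeding what remains, and show this produces a legal non-consecutive partition.

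I would begin with the base cases $n \in \{1,2,3\}$, handled directly by $1 = L_1$, $2 = L_0$, $3 = L_2$. For the inductive step with $n \geq 4$, let $k$ be the largest index with $L_k \leq n$; since $L_3 = 4 \leq n$, we have $k \geq 3$. Because $k$ is maximal, $n < L_{k+1} = L_k + L_{k-1}$, hence
\[
0 \;\leq\; n - L_k \;<\; L_{k-1}.
\]
If $n - L_k = 0$ we are done. Otherwise I would strengthen the inductive statement slightly: assume that for every positive integer smaller than $n$ the greedy algorithm yields a non-consecutive partition whose largest index $m$ satisfies $L_m \leq $ that integer. Applying this to $n - L_k$, the resulting partition uses only indices $m$ with $L_m \leq n - L_k < L_{k-1}$. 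Since $L_j$ is strictly increasing for $j \geq 2$ and $L_0, L_1 < L_2 \leq L_{k-1}$ (because $k \geq 3$), any such $m$ satisfies $m \leq k-2$. Adjoining $L_k$ therefore gives a non-consecutive partition of $n$.

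The main technical wrinkle to watch is the non-monotone start of the sequence, namely $L_1 = 1 < L_0 = 2 < L_2 = 3$; this is what forces me to dispose of the cases $n \leq 3$ separately, so that in the inductive step $L_{k-1} \geq 3$ strictly exceeds both $L_0$ and $L_1$ and the comparison $L_m < L_{k-1}$ genuinely forces $m \leq k-2$. Apart from this low-index subtlety, the proof is a direct analogue of the standard greedy argument for Zeckendorf's theorem on the Fibonacci sequence, and non-uniqueness (for example $L_0 + L_2 = 2 + 3 = 5 = L_1 + L_3 = 1 + 4$) is consistent with the theorem since only existence is being claimed here.
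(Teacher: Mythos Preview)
Your greedy-algorithm argument is correct. The one point that needs care is exactly the one you flagged: because $L_1 < L_0 < L_2$, the inequality $L_m < L_{k-1}$ does not by itself force $m < k-1$ unless you know $k \geq 3$, and your disposal of $n \leq 3$ as base cases handles this cleanly. After that, $L_{k-1} \geq L_2 = 3$ strictly dominates $L_0$ and $L_1$, so every index $m$ appearing in the inductive partition of $n - L_k$ satisfies $m \leq k-2$, and adjoining $L_k$ gives a legal non-consecutive representation.

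The paper itself does not prove Theorem~\ref{p2}; it is quoted as Zeckendorf's result, with a reference. That said, the existence claim is a byproduct of the paper's Lemma~\ref{Lemma3} (proved in Appendix~\ref{AppendixB}), which shows by induction on $m$ that the non-consecutive sums drawn from $A_m = \{L_0,\ldots,L_m\}$ realize every integer in the interval $[0, L_{m+1}-1]$ (and a bit more when $m$ is even). That argument is organized differently from yours: rather than starting from $n$ and peeling off the largest available summand, it builds up the full range of representable values by adjoining $L_{k+1}$ to the sums already available from $A_{k-1}$. The payoff of the paper's formulation is that it simultaneously pins down the exact set of achievable sums (including the curious exclusion of $L_{m+1}$ when $m$ is even), which is what drives the later ``at most two partitions'' theorem. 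Your greedy argument is shorter and more self-contained for the bare existence statement, and it also exhibits a canonical representation, but it does not by itself yield that finer structural information.
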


Note that the distinction between Theorems \ref{p1} and \ref{p2} lies in the \textit{uniqueness} property of such partitions of natural numbers in the Fibonacci and Lucas sequences.~Although $5$ is uniquely partitioned into $F_5 = 5$ in $\{F_2, F_3, \ldots \}$, its partition is not unique in the Lucas sequence as $5 = L_0 + L_2 = 2 + 3$ and $5 = L_1 + L_3 = 1 + 4$. In \cite{B2}, Brown shows various ways to have a unique partition using Lucas sequence. \footnote{For more on Brown's criteria, see \cite{BHLMT1, BHLMT2}.} In this paper, we prove the following results.

\begin{thm}\label{m1}
If we allow $L_0$ and $L_2$ to appear simultaneously in a partition, each natural number can have at most two distinct non-consecutive partitions in the Lucas sequence.
\end{thm}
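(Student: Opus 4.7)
The plan is to proceed by strong induction on $n$, with base cases $n\in\{1,2,3,4\}$ verified directly (each has a unique partition, namely $\{L_1\}$, $\{L_0\}$, $\{L_2\}$, $\{L_3\}$). The backbone of the argument is the pair of alternating-sum identities
\[
\sum_{i=0}^{k} L_{2i} \;=\; L_{2k+1}+1, \qquad \sum_{i=0}^{k} L_{2i+1} \;=\; L_{2k+2}-2,
\]
each provable by a quick induction on $k$ using the Lucas recurrence. These pin down the maximum value of a non-consecutive Lucas sum whose indices are all at most $K$: it equals $L_{K+1}+1$ if $K$ is even and $L_{K+1}-1$ if $K$ is odd, attained uniquely by the greedy sets $\{L_0,L_2,\ldots,L_K\}$ and $\{L_0,L_3,L_5,\ldots,L_K\}$, respectively (uniqueness following by a parallel induction on $K$).

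For $n\geq 5$, let $M=M(n)$ denote the largest index with $L_M\leq n$, so that $L_M\leq n<L_{M+1}$ and $M\geq 3$. The key structural claim is a \emph{max-index dichotomy}: every partition of $n$ has largest index equal to $M$, except when $n=L_{2k+1}+1$ for some $k\geq 1$, in which case one additional partition exists -- the extremal set $\{L_0,L_2,\ldots,L_{2k}\}$, with largest index $2k=M-1$. To prove this, suppose a partition has largest index $K<M$; then its sum is bounded above by $L_{K+1}\pm 1$ and below by $L_M$. Combining these squeezes $K=M-1$; a parity check then rules out $M$ even (the bound $L_M-1$ falls short), while for $M$ odd the partition must saturate the upper bound $L_M+1$, so $n=L_M+1$ and the partition coincides with the unique extremizer described above.

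The induction step then splits into two cases. If $n\neq L_{2k+1}+1$ for any $k\geq 1$, every partition of $n$ contains $L_M$, and removing $L_M$ is a bijection with partitions of $m:=n-L_M$. Since $m<L_{M+1}-L_M=L_{M-1}$, every partition of $m$ has largest index at most $M-2$ and is thus compatible with concatenation by $L_M$; by the induction hypothesis, $m$ has at most two partitions, so $n$ does as well. If instead $n=L_{2k+1}+1$ with $k\geq 1$, the extremal set $\{L_0,L_2,\ldots,L_{2k}\}$ accounts for one partition, and every other partition contains $L_{2k+1}$ and hence corresponds to a partition of $n-L_{2k+1}=1$ with max index at most $2k-1$. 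The only such partition is $\{L_1\}$ (valid because $k\geq 1$), giving exactly two partitions in total.

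The main obstacle is securing the max-index dichotomy of the second paragraph. It combines sharp upper bounds on partial sums, uniqueness of the extremizers, and a parity-sensitive case split, each of which requires careful treatment of the small-index corners caused by the non-monotonicity $L_0=2>L_1=1$ at the start of the Lucas sequence. Once this dichotomy is in place, the induction itself is routine bookkeeping.
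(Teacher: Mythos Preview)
Your approach is correct and closely parallels the paper's, though you organize the induction differently. The paper inducts on $m$, the index of the largest available Lucas number, showing that enlarging $A_m$ to $A_{m+1}$ creates no new overlaps among non-consecutive sums except at the single value $L_{m+1}+1$ when $m$ is even; its Lemma~\ref{Lemma3} (range of non-consecutive sums of $A_m$) and Lemma~\ref{Lemma5} (exactly two partitions of $L_{2m+1}+1$) play the roles of your partial-sum bounds and your exceptional case. You instead induct on $n$ and argue via greedy removal of $L_M$; your max-index dichotomy is essentially the contrapositive of the paper's non-overlap observation. Both routes rest on the same two facts: the sharp upper bound on non-consecutive sums with bounded index, and the identification of $L_{2k+1}+1$ as the only borderline value. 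Your version has the mild advantage of producing the explicit pair of partitions of $L_{2k+1}+1$ inside the main induction rather than as a separate lemma.

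One point in your sketch deserves more care. When $M$ is odd and $K=M-1$, you assert the partition ``must saturate the upper bound $L_M+1$,'' but the tools you have stated give only $L_M\le n\le L_M+1$, not the exclusion of $n=L_M$. You need the additional fact that $L_M$ itself is \emph{not} a non-consecutive sum with indices at most $M-1$; this follows by the same descent (such a sum must contain $L_{M-1}$, forcing the remainder $L_{M-2}$ to be a non-consecutive sum with indices at most $M-3$, contradicting the odd-$M$ induction hypothesis), but it is not a consequence of the max value and its uniqueness alone. The paper bundles precisely this exclusion into its Lemma~\ref{Lemma3}, whose even-$m$ clause asserts that the value $L_{m+1}$ is skipped.
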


\begin{thm}\label{m2} Suppose that we do not allow $L_0$ and $L_2$ to appear simultaneously in a partition. 
The set of all natural numbers having the summand $L_k$ in their partition is given by 
$$Z(k) \ =\ \begin{cases}\left\{2+3n+\left\lfloor\frac{n+1}{\Phi}\right\rfloor\, :\, n\ge 0\right\}, & {\rm if}~ k = 0,\\
\left\{3n+\left\lfloor\frac{n+\Phi^2}{\Phi}\right\rfloor\, :\, n\ge 0\right\}, &{\rm if}~ k = 1,\\
\left\{L_k\left\lfloor\frac{n+\Phi^2}{\Phi}\right\rfloor+ nL_{k+1}+j\, :\, n\ge 0 ~{\rm and}~  0\le j\le L_{k-1}-1\right\}, &{\rm if}~ k \ge 2.
\end{cases}$$
\end{thm}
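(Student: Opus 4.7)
The plan is to study each $n \in Z(k)$ via its (now unique) non-consecutive Lucas partition, which splits canonically as $n = L_k + H + j$: here $j$ is the ``tail'' built from indices $\leq k-2$ and $H$ is the ``head'' built from indices $\geq k+2$ (or $\geq k+3$ when $k = 0$, because of the ban on $L_0$ and $L_2$ coexisting). Uniqueness of the restricted partition follows from Theorem \ref{m1}: its ``at most two'' conclusion plus a short inspection of the exchange $L_0+L_2 = L_1+L_3$ shows that any ambiguity is traceable to the simultaneous presence of $L_0$ and $L_2$, so disallowing that configuration singles out a unique representation of every $n$.

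For the tails when $k \ge 2$, I would show that the possible values are exactly $\{0, 1, \ldots, L_{k-1}-1\}$. A valid tail corresponds to a non-consecutive subset of $\{0, 1, \ldots, k-2\}$ that avoids $\{0, 2\}$; conditioning on whether the index $k-2$ is used gives the recurrence $T_k = T_{k-1} + T_{k-2}$ with $T_2 = 1$ and $T_3 = 3$, hence $T_k = L_{k-1}$. Uniqueness forces distinct tails to yield distinct integers, and since every $m < L_{k-1}$ must have a restricted partition using only indices $\leq k-2$ (any larger index would exceed $m$), the $L_{k-1}$ tail values fill $\{0, \ldots, L_{k-1}-1\}$ exactly. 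For $k \in \{0, 1\}$ no tail indices are available, so $j = 0$ trivially.

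For the heads when $k \geq 2$, the valid index sets are the finite non-consecutive subsets of $\{k+2, k+3, \ldots\}$, which I parametrize as $\{k+m_1, k+m_2, \ldots\}$ with $2 \leq m_1$ and $m_{j+1}-m_j \geq 2$. Zeckendorf's theorem identifies these with $\NN$ via $n = \sum_j F_{m_j}$; the ordering on heads coincides with the ordering on $n$ because the largest index dominates in both the Lucas and Fibonacci sums. Using the elementary identity $L_{k+m} = F_{m-1}L_k + F_m L_{k+1}$ (easy induction on $m$),
\[ H_n \;=\; L_k \sum_j F_{m_j-1} \;+\; n\,L_{k+1} \;=\; L_k\,b(n) + n\,L_{k+1}, \qquad b(n) \,:=\, \sum_j F_{m_j-1}. \]
The critical step is the Beatty-type identity $b(n) = \lfloor(n+1)/\Phi\rfloor$, which I would establish by induction on $n$: appending one Zeckendorf summand $F_m$ to $n$ increases $b(n)$ by $F_{m-1}$, and this matches the increment of $\lfloor(n+1)/\Phi\rfloor$ up to an error controlled by Binet's formula. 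Rewriting $1 + \lfloor(n+1)/\Phi\rfloor = \lfloor(n+\Phi^2)/\Phi\rfloor$ and combining with $n = L_k + H_n + j$ yields the stated formula for $k \geq 2$.

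For $k = 1$ the same head analysis applies (heads still begin at $k+2 = 3$), producing $Z(1)_n = 1 + b(n) + 3n = 3n + \lfloor(n+\Phi^2)/\Phi\rfloor$. For $k = 0$, the heads instead begin at $k+3 = 3$ because $L_2$ is forbidden, so I would use the expansion $L_{m+1} = 2F_m + F_{m+1}$ to obtain $H_n = 2n + (n + b(n)) = 3n + b(n)$, hence $Z(0)_n = 2 + 3n + \lfloor(n+1)/\Phi\rfloor$. I expect the main technical obstacle to be the Beatty--Zeckendorf identity for $b(n)$ in the third paragraph; everything else reduces to the Lucas recurrence together with the uniqueness afforded by the restriction.
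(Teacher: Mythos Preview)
Your proposal is correct and takes a genuinely different route from the paper. The paper first studies $\mathcal{X}_k$ (integers with $L_k$ as \emph{smallest} summand), orders them as $q_k(1)<q_k(2)<\cdots$, and proves by induction along Fibonacci-indexed blocks that the gap $q_k(j+1)-q_k(j)$ is $L_{k+1}$ or $L_{k+2}$ according to the $j$th character of the golden string; the closed form then drops out of the known count $\lfloor(n+1)/\Phi\rfloor$ of $B$'s among the first $n$ characters. You bypass the golden string entirely: the identity $L_{k+m}=F_{m-1}L_k+F_mL_{k+1}$ lets you parametrize heads directly by Zeckendorf representations $n=\sum F_{m_j}$, reducing everything to the single ``shift-down'' identity $\sum F_{m_j-1}=\lfloor(n+1)/\Phi\rfloor$. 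This is more algebraic and arguably more transparent, since it explains \emph{why} the golden ratio appears without the detour through the Fibonacci word; on the other hand, the paper's route lets it quote the floor formula as a known fact (Remark~\ref{r1}(2)) rather than prove it. Two small cautions: first, your induction sketch for the Beatty--Zeckendorf identity (``appending one summand $F_m$ increases $b(n)$ by $F_{m-1}$, matching the floor up to Binet error'') is not literally induction on $n$ and will need to be organized more carefully---one clean way is to verify it on single Fibonacci numbers and then show additivity across non-adjacent summands by bounding the fractional parts. Second, the uniqueness claim you extract from Theorem~\ref{m1} plus the exchange $L_0+L_2=L_1+L_3$ is exactly Brown's result \cite[Lemma~3]{B2}, which the paper cites; you should invoke that rather than leave it to ``a short inspection''.
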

Theorem \ref{m2} is an analogue of \cite[Theorem 3.4]{MG1}. For $k\ge 0$, we find all natural numbers having the summand $L_k$ in their partition. We have a different formula when $k = 0$ instead of one formula for all values of $k$ as in \cite[Theorem 3.4]{MG1}. 

Our next result is predicted by \cite[Theorem 1]{Cha}, which deals with general recurrence relations; however, in the case of Lucas numbers, we can relate Lucas partitions to the golden string.

\begin{thm}\label{m3}
If we allow $L_0$ and $L_2$ to appear simultaneously in a partition, the proportion of natural numbers that are not uniquely partitioned into the sum of non-consecutive terms of the Lucas sequence converges to $\frac{1}{3\Phi+1}$, where $\Phi$ is the golden ratio. 
\end{thm}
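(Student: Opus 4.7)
The plan is to count the set of \emph{doubles} (naturals with two non-consecutive Lucas partitions) explicitly, using Theorem~\ref{m1} as a starting point: each natural has one or two partitions. The key rewrite identity is
\[L_0 + L_2 + L_4 + \cdots + L_{2m} \;=\; L_1 + L_{2m+1} \qquad (m \geq 1),\]
which follows by a one-line induction from the Lucas recurrence. Combining this with Theorem~\ref{m1}, one argues that a non-consecutive subset $T \subseteq \{0, 1, 2, \ldots\}$ admits an alternate representation if and only if $\{0, 2\} \subseteq T$; letting $m$ be maximal with $\{0, 2, \ldots, 2m\} \subseteq T$, one writes $T = \{0, 2, \ldots, 2m\} \cup T''$ where $T''$ is a non-consecutive subset of $\{2m+3, 2m+4, \ldots\}$, with alternate $\{1, 2m+1\} \cup T''$. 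A direct check of the greedy algorithm shows that greedy produces the unique \emph{canonical} partition avoiding the pair $\{L_0, L_2\}$, since at residual value $5$ greedy selects $L_3$ and then $L_1$.

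I then exploit the bijection between naturals in $[0, L_{n+1} - 1]$ and canonical partitions with maximum index at most $n$. The total count of non-consecutive subsets of $\{0, \ldots, n\}$ is $F_{n+3}$; those containing $\{0, 2\}$ biject with non-consecutive subsets of $\{4, \ldots, n\}$, numbering $F_{n-1}$. Inclusion-exclusion yields $F_{n+3} - F_{n-1} = L_{n+1}$, matching $|[0, L_{n+1} - 1]|$. Under the bijection, doubles correspond precisely to canonicals $\{1, 2m+1\} \cup T''$ with $m \geq 1$, $2m+1 \leq n$, and $T'' \subseteq \{2m+3, \ldots, n\}$ non-consecutive, so that
\[D(L_{n+1} - 1) \;=\; \sum_{\substack{m \geq 1 \\ 2m+1 \leq n}} F_{n-2m},\]
which evaluates via the standard identities $\sum_{j=1}^{K} F_{2j-1} = F_{2K}$ and $\sum_{j=1}^{K-1} F_{2j} = F_{2K-1} - 1$ to $F_{n-1}$ when $n$ is odd and $F_{n-1} - 1$ when $n$ is even.

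Along the subsequence $N = L_{n+1} - 1$, using $F_k \sim \Phi^k / \sqrt{5}$ and $L_k \sim \Phi^k$, one obtains
\[\frac{D(L_{n+1} - 1)}{L_{n+1} - 1} \;\longrightarrow\; \frac{1}{\sqrt{5}\,\Phi^2}.\]
Since $\Phi^2 = \Phi + 1$ and $\sqrt{5} = 2\Phi - 1$, one computes $\sqrt{5}\,\Phi^2 = (2\Phi - 1)(\Phi + 1) = 3\Phi + 1$, producing the claimed value $1/(3\Phi + 1)$ for the subsequential limit.

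The main obstacle is upgrading the subsequential result to a full limit over all $N$, since monotonicity alone only yields a factor-$\Phi$ squeeze. I would handle this via a self-similar recursion: for $N = L_{n+1} + M$ with $M \in [0, L_n - 1]$, the fact that greedy of $L_{n+1} + m$ equals $\{L_{n+1}\} \cup \text{greedy}(m)$ shows that the doubles in $[L_{n+1}, N]$ are precisely the shifts $L_{n+1} + m$ with $m$ a double in $[1, M]$, together with a single exceptional point $L_{n+1} + 1$ when $n$ is even (because then $\{1, n+1\}$ admits the alternate $\{0, 2, \ldots, n\}$). This yields the recursion $D(N) = D(L_{n+1} - 1) + D(M) + \delta$ with $\delta \in \{0, 1\}$. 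A strong induction on $n$, expressing $D(N)/N$ as a near-convex combination of $D(L_{n+1} - 1)/(L_{n+1} - 1)$ and $D(M)/M$ plus $o(1)$ (both convergent to $1/(3\Phi + 1)$ by the inductive hypothesis), then forces $D(N)/N \to 1/(3\Phi + 1)$ for all $N$.
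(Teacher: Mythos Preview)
Your proof is correct, but it takes a genuinely different route from the paper.  Both arguments begin from the same characterization---a natural number is a ``double'' precisely when it admits a partition containing both $L_0$ and $L_2$ (equivalently, as you show, when its greedy/canonical partition has the shape $\{1,2m+1\}\cup T''$).  From there the paper and you diverge.

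The paper parametrizes the set $K$ of doubles \emph{explicitly}: it lists the doubles in increasing order (Table~3), observes that this table has the same recursive structure as Table~1, and applies Lemma~\ref{k1} (shifted) to conclude that consecutive gaps in $K$ are $L_3$ or $L_4$ according to the characters of the golden string $S$.  This yields the closed form $K=\{5+4n+3\lfloor (n+1)/\Phi\rfloor:n\ge 0\}$, from which the density $1/(4+3/\Phi)=1/(3\Phi+1)$ drops out immediately with no separate ``upgrade'' step.

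Your route is purely combinatorial and avoids the golden string entirely: you count non-consecutive index sets to get the exact value $D(L_{n+1}-1)=F_{n-1}+O(1)$, read off the subsequential limit via $F_{n-1}/L_{n+1}\to 1/(\sqrt5\,\Phi^2)=1/(3\Phi+1)$, and then promote this to the full limit through the recursion $D(L_{n+1}+M)=D(L_{n+1}-1)+D(M)+O(1)$.  Your approach is more self-contained (no reliance on the Section~4 machinery) and, once one notes that $F_{n-1}-\alpha L_{n+1}=O(1)$, actually gives the sharper error term $D(N)=\alpha N+O(\log N)$.  The paper's approach is shorter because the golden-string lemmas have already done the structural work, and it yields the density in one line without any subsequence argument.
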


\section{Preliminaries}
\subsection{Definitions}
\begin{defi}
Let $A = \left\{a_0, a_1, \ldots, a_m \right\}$ be the set consisting of the first $m+1$ terms of the sequence $\big\{a_k\big\}_{k=0}^{\infty}$.~We say a proper subset $B$ of $A$ is a \textit{non-consecutive subset} of $A$ if the elements of $B$ are pairwise non-consecutive in $\big\{a_k\big\}_{k=0}^{\infty}$.~Furthermore, we say a sum $S$ is a \textit{non-consecutive sum} of $A$ if $S$ is the sum of distinct elements of $A$ that are pairwise non-consecutive in $\big\{a_k\big\}_{k=0}^{\infty}$. 
\end{defi}
\begin{defi}
Let $A_m = \left\{L_0, L_1, \ldots, L_m \right\}$ denote the set consisting of the first $m+1$ terms of the Lucas sequence.
\end{defi}

\subsection{The golden string}
The golden string $S = BABBABABBABBA\ldots$ is defined to be the infinite string of $A$'s and $B$'s constructed recursively as follows. Let $S_1 = A$ and $S_2 = B$, and then, for $k\ge 3$, $S_k$ is the concatenation of $S_{k-1}$ and $S_{k-2}$, which we denote by $S_{k-1}\circ S_{k-2}$. For example, $S_3 = S_2\circ S_1 = a_2\circ a_1 = BA$, $S_4 = S_3\circ S_2 = a_2a_1\circ a_2 = BAB$, $S_5  = S_4\circ S_3  = BABBA$, and so on. Interestingly, the golden string is highly connected to the Zeckendorf partition \cite{MG2}. As we will see later, the string is also closely related to the partitions of natural numbers into Lucas numbers. 
\begin{rek}\label{r1}\normalfont We mention two properties of the golden string that we will use in due course. 
\begin{itemize}
\item[(1)] For $j\ge 1$, the $(F_{2j})$th character of $S$ is $B$ and the $(F_{2j+1})$th character of $S$ is $A$. This can be easily proved using induction. 
\item[(2)] The number of $B$'s amongst the first $n$ characters of $S$ is given by 
$\left\lfloor\frac{n+1}{\Phi}\right\rfloor$,
where $\Phi = \frac{1+\sqrt{5}}{2}$ is the golden ratio. For a proof of this result, see \cite[Lemma 3.3]{MG2}.
\end{itemize}
\end{rek}

\section{At Most Two Partitions}

In this section, we present our results that determine the maximum number of non-consecutive partitions that a natural number can have in the Lucas sequence, the proofs of which are adapted from \cite{HR}. Before we prove Theorem \ref{m1}, we introduce the following preliminary lemmas. For the proofs of Lemmas \ref{Lemma3} and \ref{Lemma5}, see Appendix B.

\begin{lem}\label{Lemma3}
Let $S$ be any non-consecutive sum of $A_m$.~Then
    \begin{enumerate}
            \item if $m$ is odd, $S$ assumes all values from 0 to $L_{m+1}-1$ inclusive, and
            \item if $m$ is even, then $S$ assumes all values from 0 to $L_{m+1}+1$ inclusive, excluding $L_{m+1}$.
    \end{enumerate}
\end{lem}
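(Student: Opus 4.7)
The plan is to induct on $m$, leveraging the decomposition
$$T_{m+1} \;=\; T_m \;\cup\; \bigl( L_{m+1} + T_{m-1} \bigr),$$
where $T_m$ denotes the set of non-consecutive sums drawn from $A_m$. This identity comes from splitting according to whether $L_{m+1}$ appears: if it does not, one is left with a non-consecutive sum from $A_m$; if it does, then $L_m$ is forbidden, so the remaining summands form a non-consecutive sum from $A_{m-1}$. Coupled with the Lucas recurrence $L_{m+2} = L_{m+1} + L_m$, this identity is the whole engine of the proof.

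For the base cases I would just enumerate: $T_0 = \{0, 2\}$, matching $\{0,\dots,L_1+1\}\setminus\{L_1\} = \{0,2\}$ (even case, $m=0$), and $T_1 = \{0, 1, 2\}$, matching $\{0,\dots,L_2-1\} = \{0,1,2\}$ (odd case, $m=1$). Having two consecutive base values in hand, the inductive step can proceed.

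For the inductive step I would split on the parity of $m+1$. If $m+1$ is odd, so $m$ is even and $m-1$ is odd, the inductive hypothesis gives $T_m = \{0,1,\dots,L_{m+1}+1\}\setminus\{L_{m+1}\}$ and $T_{m-1} = \{0,1,\dots,L_m-1\}$. Therefore $L_{m+1}+T_{m-1} = \{L_{m+1},L_{m+1}+1,\dots,L_{m+1}+L_m-1\} = \{L_{m+1},\dots,L_{m+2}-1\}$ by the Lucas recurrence. The shifted block plugs the missing point $L_{m+1}$ in $T_m$ and extends the range up to $L_{m+2}-1$, giving $T_{m+1} = \{0,1,\dots,L_{m+2}-1\}$, exactly the odd-$m{+}1$ claim. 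If instead $m+1$ is even, the inductive hypothesis gives $T_m = \{0,\dots,L_{m+1}-1\}$ and $T_{m-1} = \{0,\dots,L_m+1\}\setminus\{L_m\}$, so $L_{m+1}+T_{m-1} = \{L_{m+1},\dots,L_{m+2}+1\}\setminus\{L_{m+2}\}$. The two blocks abut at $L_{m+1}$, so the union is $\{0,\dots,L_{m+2}+1\}\setminus\{L_{m+2}\}$, matching the even-$m{+}1$ claim.

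The only real subtlety, and hence the main thing to watch, is the bookkeeping of how the two blocks in $T_m \cup (L_{m+1}+T_{m-1})$ meet: in the odd case the shifted block must cover the single hole at $L_{m+1}$ in $T_m$, and in the even case the blocks must abut without creating or filling a hole anywhere except at $L_{m+2}$. Both reduce to the trivial inequality $L_m \geq 2$ for all $m \geq 0$, together with the identity $L_{m+1}+L_m=L_{m+2}$, so no hard inequalities arise. Beyond that, one should be careful that "non-consecutive sum'' in the definition allows $S$ to be the sum over all of $A$ (not just a proper subset), so that, for instance, the sum $L_0 = 2$ is a legitimate non-consecutive sum of $A_0$; this is what makes the base case $m=0$ agree with the formula.
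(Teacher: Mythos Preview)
Your argument is correct and uses the same underlying decomposition as the paper's proof, namely $T_{m+1}=T_m\cup(L_{m+1}+T_{m-1})$. The paper, however, organizes the induction less symmetrically: it only writes out one parity (``without loss of generality''), first argues that every value in the target range is attainable, and then runs a separate contradiction argument to show that the single excluded value $L_{k+2}$ really cannot be realized as a non-consecutive sum of $A_{k+1}$. By treating the decomposition as a set \emph{equality} from the outset and tracking both parities explicitly, you get the missing value for free and avoid that extra step; this is a genuine simplification, though not a different idea. Your remark about the ``proper subset'' wording in the definition is also well taken---the paper's own base case confirms that the full set must be allowed.
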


\begin{lem}\label{Lemma5}
If $m \geq 0$, then $L_{2m+1}+1$ has exactly two non-consecutive partitions in the Lucas sequence.
\end{lem}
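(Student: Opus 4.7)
The plan is to exhibit two explicit non-consecutive partitions of $N:=L_{2m+1}+1$ and then use Lemma~\ref{Lemma3} to show there are no others. The two candidates are
\[
P_1 \;=\; L_1 + L_{2m+1}, \qquad P_2 \;=\; L_0 + L_2 + L_4 + \cdots + L_{2m}.
\]
That $P_1$ is non-consecutive and sums to $N$ is immediate for $m\ge 1$. The identity $\sum_{k=0}^{m} L_{2k} = L_{2m+1}+1$ validating $P_2$ follows by a one-line induction on $m$ (using $L_{2k+1}+L_{2k+2}=L_{2k+3}$), and the indices $0,2,4,\dots,2m$ are pairwise non-consecutive by construction.

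For uniqueness, let $P$ be any non-consecutive partition of $N$. Since $L_{2m+2}=L_{2m+1}+L_{2m}>N$, every index appearing in $P$ is at most $2m+1$, so $P\subseteq A_{2m+1}$. I split on whether $L_{2m+1}\in P$. In Case~A ($L_{2m+1}\in P$), the remaining summands total $1$, which is attained only by $L_1$; since $L_1$ is non-consecutive with $L_{2m+1}$, this forces $P=P_1$. In Case~B ($L_{2m+1}\notin P$), every summand lies in $A_{2m}$. I claim $L_{2m}\in P$: otherwise every summand has index at most $2m-1$, and Lemma~\ref{Lemma3}(1), applied with the odd top index $2m-1$, bounds the total by $L_{2m}-1<N$, a contradiction.

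Having forced $L_{2m}\in P$, the remaining summands lie in $A_{2m-2}$ (because $L_{2m-1}$ would be consecutive to $L_{2m}$) and sum to $L_{2m-1}+1$. This is an instance of the same problem with $m$ replaced by $m-1$, so an induction on $m$---with base case $L_1+1=2=L_0$---collapses Case~B to $P=P_2$. Combining Cases~A and~B then yields exactly the two claimed partitions.

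The principal obstacle is arranging the descent in Case~B so that Lemma~\ref{Lemma3}(1) can be invoked cleanly at each stage; this requires keeping track of the parity of the top index and verifying that it is always odd when the bound is applied. Conceptually this is bookkeeping rather than a new idea, but it must be handled carefully at the boundary $m=0$, where $L_{2m+1}$ coincides with the ``leftover'' $L_1$ of Case~A and $P_1$ degenerates.
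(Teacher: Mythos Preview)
Your argument is correct and follows essentially the same route as the paper's proof: both proceed by induction on $m$, use Lemma~\ref{Lemma3}(1) at the odd index $2m-1$ to force $L_{2m}$ into any partition avoiding $L_{2m+1}$, and then recurse on the residual sum $L_{2m-1}+1$ inside $A_{2m-2}$. The paper phrases the induction as ``$L_{2m+1}+1$ has exactly one non-consecutive sum in $A_{2m}$'' (leaving the second partition $L_1+L_{2m+1}$ implicit), whereas you make both partitions $P_1,P_2$ explicit and split on whether $L_{2m+1}$ appears; this is only an organizational difference. Your observation that $P_1$ degenerates at $m=0$ is apt: as stated, the lemma actually fails there (the only non-consecutive partition of $L_1+1=2$ is $\{L_0\}$), and the paper's own proof likewise begins its base cases at $m=1$; the lemma is only invoked in the paper for $m\ge 1$.
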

\begin{proof}[Proof of Theorem \ref{m1}]
It suffices to show that for every non-negative integer $m$, there is no natural number that is equal to three or more distinct non-consecutive sums of $A_m$.~We proceed by strong induction.~No natural is equal to three or more distinct non-consecutive sums of $A_0$ and $A_1$.~This shows the base case.~Assume Theorem \ref{m1} holds for all non-negative integers less than or equal to $m=k$.~In our first case, suppose that $k$ is odd.~From Lemma \ref{Lemma3}, the non-consecutive sums that we can form from $A_k$ are the values from 0 to $L_{k+1}-1$ inclusive.~Hence, when we add the term $L_{k+1}$ to $A_k$, all new non-consecutive sums that can be formed must be at least $L_{k+1}$.~This implies there is no possible way in which we can form a third distinct non-consecutive sum of $A_{k+1}$ for any natural number because there is no intersection between the non-consecutive sums in which we can form before and after the addition of the term $L_{k+1}$.~When $k \geq 2$ is even, we have from Lemma \ref{Lemma3} that all non-consecutive sums we can form from $A_k$ are the values from 0 to $L_{k+1}+1$ inclusive, excluding $L_{k+1}$.~When we add the term $L_{k+1}$ to $A_k$, all new non-consecutive sums that can be formed are at least $L_{k+1}$ with $L_{k+1}+1$ being the only non-consecutive sum formed again, namely $L_{k+1}+L_1$.~By Lemma \ref{Lemma5}, we know that $L_{k+1}+1$ has exactly two distinct non-consecutive partitions in the Lucas sequence.~Therefore, there is no possible way in which we can form a third distinct non-consecutive sum of $A_{k+1}$ for any natural number.~This completes the inductive step.
\end{proof}

\section{Partitions with a Fixed Term}

Let $\mathcal{X}_k$ denote the set of all natural numbers having $L_k$ as the smallest summand in their partition. Let $\mathcal{Q}_k = \{q_k(j)\}_{j\ge 1}$ be the strictly increasing sequence obtained by rearranging the elements of $\mathcal{X}_k$ into ascending numerical order. We consider the cases $k = 0$ and $k\ge 1$ separately.

\subsection{When $k = 0$}

Table 1 replaces each term $q_k(j)$ in $\mathcal{Q}_k$ with an ordered list of the summands in its partition. 

\begin{center}
\begin{tabular}{cccccccccccc}
 Row  &               &          &     &        &    &         &     &        &  &          &    \\
 \hline
1     &               & $L_0$    &     &        &    &         &     &        &  &         &     \\
2     &               & $L_0$    &     & $L_3$  &    &         &     &        &  &         &    \\
3     &               & $L_0$    &     &        &    & $L_4$   &     &        &  &         &     \\
$4$   &               & $L_0$    &     &        &    &         &     & $L_5$  &  &         &     \\
$5$   &               & $L_0$    &     & $L_3$  &    &         &     & $L_5$  &  &         &       \\
$6$   &               & $L_0$    &     &        &    &         &     &        &  &  $L_6$  &   \\
$7$   &               & $L_0$    &     & $L_3$  &    &         &     &        &  &  $L_6$  &     \\
$8$   &               & $L_0$    &     &        &    & $L_4$   &     &        &  &  $L_6$  &    
\end{tabular}
\end{center}

\begin{center}
Table 1. The partitions of the natural numbers having $L_0$ as their smallest summand.  
\end{center}

\begin{lem}\label{l1}
For $j\ge 3$, the rows of Table 1 for which $L_{j}$ is the largest summand are those numbered from $F_{j-1}+1$ to $F_{j}$ inclusive. 
\end{lem}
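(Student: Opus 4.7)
The plan is to prove Lemma \ref{l1} by a combinatorial count combined with induction on $j$. Since Table~1 lists partitions in increasing order of integer value, I only need to count the partitions with largest summand exactly $L_j$ and to show that these partitions occupy a contiguous block of row numbers.

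First I would characterize the relevant partitions. A row of Table~1 corresponds to a partition that contains $L_0$ but neither $L_1$ (excluded by non-consecutivity with $L_0$) nor $L_2$ (excluded by the convention of Theorem~\ref{m2}). For such a partition to have largest summand $L_j$, it must be $L_0 + L_j$ plus a set of middle summands $L_i$ with $3 \le i \le j-2$ (since $L_{j-1}$ is excluded by non-consecutivity with $L_j$), whose indices are pairwise non-consecutive. Counting subsets of $\{3, 4, \ldots, j-2\}$ with no two consecutive integers is the classical Fibonacci enumeration: on a set of $n$ consecutive integers there are $F_{n+2}$ such subsets, so here with $n = j-4$ (and the empty middle range when $j \in \{3, 4\}$) we obtain exactly $F_{j-2}$ partitions.

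Next I would establish the monotonicity required to show these $F_{j-2}$ partitions form a contiguous block immediately after all partitions with largest summand at most $L_{j-1}$. Every partition with largest summand $L_j$ has value at least $L_0 + L_j = L_j + 2$, while every partition whose summands all lie in $A_{j-1}$ has value at most $L_j + 1$ by Lemma~\ref{Lemma3}. Hence the two classes of rows are separated in value, and because Table~1 is ordered by integer value, the partitions with largest summand $L_j$ form an unbroken run of rows.

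I would then close by induction on $j$. The base case $j = 3$ gives the single partition $L_0 + L_3$, placed in row $2 = F_2 + 1 = F_3$, matching the claim. For the inductive step, the hypothesis yields that the total number of rows with largest summand at most $L_{j-1}$ equals
$$1 + \sum_{i=3}^{j-1} F_{i-2} \ =\ 1 + \sum_{k=1}^{j-3} F_k \ =\ 1 + (F_{j-1} - 1) \ =\ F_{j-1},$$
where the leading $1$ accounts for row~1 (the partition $L_0$ alone) and we used $\sum_{k=1}^{m} F_k = F_{m+2} - 1$. Appending the $F_{j-2}$ new rows places the partitions with largest summand $L_j$ in rows $F_{j-1} + 1$ through $F_{j-1} + F_{j-2} = F_j$, as claimed. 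The main obstacle I expect is the monotonicity step, which depends on a careful use of Lemma~\ref{Lemma3}; the combinatorial count is routine.
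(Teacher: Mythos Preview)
Your proof is correct. It parallels the paper's inductive framework but differs in two substantive ways. First, you count the partitions with largest summand $L_j$ directly via the classical enumeration of non-consecutive subsets of $\{3,\ldots,j-2\}$, obtaining $F_{j-2}$; the paper instead counts these rows by the implicit bijection with rows whose largest summand is at most $L_{j-2}$ (delete or append $L_{j}$), yielding the same number via a telescoping sum. Second, and more importantly, you explicitly justify the contiguity of the block by comparing $L_0+L_j=L_j+2$ against the Lemma~\ref{Lemma3} bound $L_j+1$ for non-consecutive sums from $A_{j-1}$; the paper's proof takes this monotonicity for granted and simply asserts that the new rows are those immediately following row $F_{m}$. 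Your route is therefore a bit longer but more self-contained, while the paper's is terser but leans on the reader supplying the ordering argument you wrote out.
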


\begin{proof}
We prove by induction. \textit{Base cases:} it is easy to check that the statement of the lemma is true for $j = 3$ and $j = 4$. \textit{Inductive hypothesis:} assume that it is true for all $j$ such that $3\le j\le m$ for some $m\ge 4$. By the inductive hypothesis, the number of rows such that their largest summands are no greater than $L_{m-1}$ is
$$1+\sum_{j=3}^{m-1}(F_{j} - F_{j-1}) \ =\ F_{m-1},$$
which is also the number of rows whose largest summand is $L_{m+1}$. Due to the inductive hypothesis, the rows whose largest summand is $L_m$ are numbered from $F_{m-1}+1$ to $F_{m}$ inclusive. Therefore, the rows whose largest summand is $L_{m+1}$ are numbered from $F_{m}+1$ to $F_{m+1}$, as desired. This completes our proof. 
\end{proof}

\begin{lem}\label{k1}
For $j\ge 1$, we have
$$q_k(j+1)-q_k(j) \ =\ \begin{cases}L_{2}, & \mbox{ if }A \mbox{ is the }j\mbox{th character of }S,\\
L_{3}, &\mbox{ if }B \mbox{ is the }j\mbox{th character of }S.\end{cases}$$
\end{lem}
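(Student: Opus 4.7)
The plan is to combine the block decomposition of Table 1 given by Lemma \ref{l1} with the recursive structure of the golden string. There are two types of consecutive rows to analyze: those lying within a single block (where the largest summand $L_j$ appears in both partitions), and those straddling two adjacent blocks (where the largest summand jumps from $L_j$ to $L_{j+1}$).

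For the first type, I would observe that, by Lemma \ref{l1}, the rows of block $L_j$ are precisely the partitions $L_0+L_j+P$ as $P$ ranges over non-consecutive sums from $\{L_3,\ldots,L_{j-2}\}$, while the first $F_{j-2}$ rows of Table 1 are precisely the partitions $L_0+P$ for the same collection of $P$'s. Since translating by the constant $L_j$ preserves order, this yields the self-similarity
\[ q_0(F_{j-1}+m)\ =\ L_j+q_0(m),\qquad 1\le m\le F_{j-2}, \]
and hence $q_0(F_{j-1}+m+1)-q_0(F_{j-1}+m)=q_0(m+1)-q_0(m)$ for $1\le m\le F_{j-2}-1$.

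For the second type, the last row of block $L_j$ satisfies $q_0(F_j)=L_0+L_j+L_{j-2}+L_{j-4}+\cdots$, terminating at $L_3$ if $j$ is odd and at $L_4$ if $j$ is even. Using the standard Lucas identities $L_1+L_3+\cdots+L_{2k-1}=L_{2k}-2$ and $L_2+L_4+\cdots+L_{2k}=L_{2k+1}-1$, this telescopes to $q_0(F_j)=L_{j+1}-1$ for $j$ odd and $q_0(F_j)=L_{j+1}-2$ for $j$ even. Since the first row of the next block has sum $L_0+L_{j+1}=L_{j+1}+2$, the inter-block difference equals $L_2$ when $j$ is odd and $L_3$ when $j$ is even.

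These facts match the golden string exactly: by Remark \ref{r1}(1) the $F_j$-th character of $S$ is $A$ when $j$ is odd and $B$ when $j$ is even, and by the recursion $S_j=S_{j-1}\circ S_{j-2}$ the characters of $S$ at positions $F_{j-1}+1,\ldots,F_j$ reproduce those at positions $1,\ldots,F_{j-2}$. A strong induction on the position $n$ (base case $q_0(2)-q_0(1)=L_3$ matching $S[1]=B$) then propagates the correspondence through all within-block copies and inter-block transitions. The main obstacle I anticipate is the careful bookkeeping in the bijection argument: one must verify that ordering the rows of block $L_j$ by total sum coincides with ordering the underlying sums $P$, and that Lemma \ref{l1}'s cumulative counts identify those $P$'s with precisely the first $F_{j-2}$ sums enumerated in Table 1.
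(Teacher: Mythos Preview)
Your proposal is correct and follows essentially the same approach as the paper's own proof: both argue by strong induction on the row index, invoke Lemma~\ref{l1} to obtain the translation identity $q_0(i+F_m)=q_0(i)+L_{m+1}$ (your self-similarity relation, up to the index shift $j\leftrightarrow m+1$), use this to reduce within-block differences to earlier ones already covered by the inductive hypothesis, and then handle the boundary case $j=F_m$ by an explicit telescoping of alternating Lucas sums, matched to the golden string via Remark~\ref{r1}(1). The only cosmetic difference is that the paper computes the boundary difference directly as $L_{m+1}-(L_m+L_{m-2}+\cdots)$, whereas you compute $q_0(F_j)$ and $q_0(F_j+1)$ separately before subtracting; the bookkeeping concern you flag at the end is precisely the step the paper summarizes as ``the ordering of the rows in Table~1 implies\ldots''.
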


\begin{proof}
We prove by induction. \textit{Base cases:} it is easy to check that the statement of the lemma is true for $1\le j\le F_4-1$. \textit{Inductive hypothesis:} suppose that it is true for $1\le j\le F_m-1$ for some $m\ge 4$. By Lemma \ref{l1}, the number of rows in Table 1 whose largest summand is no greater than $L_{m-1}$ is 
$$1+ \sum_{j=3}^{m-1}(F_{j} - F_{j-1}) \ =\ F_{m-1},$$
which is also the number of rows whose largest summand is $L_{m+1}$. Furthermore, the rows for which $L_{m+1}$ is the largest summand are numbered from $F_{m}+1$ to $F_{m+1}$ inclusive. Therefore, the ordering of the rows in Table 1 implies that 
$q_k(i+F_{m})\ =\ q_k(i) + L_{m+1}$,
for $1\le i\le F_{m-1}$. Hence, for $1\le i\le F_{m-1}-1$, we have 
\begin{align*}
    q_k(i+1+F_{m}) - q_k(i+F_{m}) &\ =\ (q_k(i+1) + L_{m+1})-(q_k(i) + L_{m+1})\ =\ q_k(i+1) - q_k(i).
\end{align*}
By the construction of $S$, the substring comprising of its first $F_{m-1}$ characters is identical to the substring of its characters numbered from $F_{m}+1$ to $F_{m+1}$ inclusive. Thus the lemma is true for $F_{m}+1\le j\le F_{m+1}-1$. It remains to show that it is true for $j = F_{m}$. We have
\begin{align*}
    q_k(F_m+1) - q_k(F_m) \ =\begin{cases}\ L_{m+1} - (L_{m}+L_{m-2} + \cdots + L_4) \ =\ L_3,&\mbox{ if }m\mbox{ is even,}\\
    L_{m+1} - (L_m + L_{m-2} + \cdots + L_3)\ =\ L_2,&\mbox{ if }m\mbox{ is odd.}\end{cases}
\end{align*}
By Remark \ref{r1} item (1), we know that the lemma is true for $j = F_{m}$, completing the proof.  
\end{proof}

\subsection{When $k\ge 1$}

Table 2 replaces each term $q_k(j)$ in $\mathcal{Q}_k$ with an ordered list of the summands in its partition. 
\begin{center}
\begin{tabular}{cccccccccccc}
 Row  &               &          &     &        &    &         &     &        &  &          &    \\
 \hline
1     &               & $L_k$    &     &        &    &         &     &        &  &         &     \\
2     &               & $L_k$    &     & $L_{k+2}$  &    &         &     &        &  &         &    \\
3     &               & $L_k$    &     &        &    & $L_{k+3}$   &     &        &  &         &     \\
$4$   &               & $L_k$    &     &        &    &         &     & $L_{k+4}$  &  &         &     \\
$5$   &               & $L_k$    &     & $L_{k+2}$  &    &         &     & $L_{k+4}$  &  &         &       \\
$6$   &               & $L_k$    &     &        &    &         &     &        &  &  $L_{k+5}$  &   \\
$7$   &               & $L_k$    &     & $L_{k+2}$  &    &         &     &        &  &  $L_{k+5}$  &     \\
$8$   &               & $L_k$    &     &        &    & $L_{k+3}$   &     &        &  &  $L_{k+5}$  &     
\end{tabular}
\end{center}
\begin{center}
Table 2. The partitions of the natural numbers having $L_k$ as their smallest summand.  
\end{center}

Table 2 is similar to Table 1 in \cite{MG1}. The next lemma follows from \cite[Lemma 3.1]{MG1}. 

\begin{lem}\label{l2}
For $j\ge 2$, the rows of Table 2 for which $L_{k+j}$ is the largest summand are those numbered from $F_{j}+1$ to $F_{j+1}$ inclusive. 
\end{lem}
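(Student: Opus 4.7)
The plan is to mirror the inductive strategy used for Lemma \ref{l1}. Because the rows of Table 2 are listed in ascending numerical order, the proof splits into two ingredients: first, counting for each $j \ge 2$ the number of partitions in $\mathcal{Q}_k$ whose largest summand is $L_{k+j}$; and second, verifying that every such partition is numerically greater than every partition whose largest summand is $L_{k+i}$ with $2 \le i < j$. With both in hand, a short summation and an induction on $j$ pin down the row range.

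For the count, I would observe that any partition in $\mathcal{X}_k$ with smallest summand $L_k$ and largest summand $L_{k+j}$ has the shape $L_k + L_{k+j} + \sum_t L_{k+i_t}$, where the indices $i_1 < i_2 < \cdots < i_r$ form a (possibly empty) non-consecutive sequence in $\{2, 3, \ldots, j-2\}$. Such sequences correspond bijectively to non-consecutive subsets of a $(j-3)$-element run of consecutive integers, and the standard identity that the number of non-consecutive subsets of $\{1, \ldots, n\}$ equals $F_{n+2}$ yields exactly $F_{j-1}$ partitions with largest summand $L_{k+j}$. The degenerate cases $j = 2, 3$, where the middle range is empty, should be checked directly against Table 2.

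For the ordering, it suffices to show that the largest possible value of a partition with largest summand $L_{k+j}$ is strictly less than $L_k + L_{k+j+1}$, which is the smallest possible value of a partition with largest summand $L_{k+j+1}$. Since $L_{k+j+1} = L_{k+j} + L_{k+j-1}$, this reduces to proving that $L_{k+j-1}$ strictly exceeds any non-consecutive sum taken from $\{L_{k+2}, \ldots, L_{k+j-2}\}$; this in turn follows from iteratively expanding $L_{k+j-1} = L_{k+j-2} + L_{k+j-4} + L_{k+j-6} + \cdots$ by the Lucas recurrence and observing a strictly positive leftover term ($L_{k+1}$ or $L_{k+2}$, depending on the parity of $j$).

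Combining the ingredients, the total number of rows with largest summand at most $L_{k+j}$ equals $1 + \sum_{i=2}^{j} F_{i-1} = 1 + (F_{j+1} - 1) = F_{j+1}$, where the leading $1$ accounts for the single row consisting of $L_k$ alone and I use $\sum_{i=1}^{n} F_i = F_{n+2} - 1$. An induction on $j$ then places the rows with largest summand exactly $L_{k+j}$ at positions $F_j + 1$ through $F_{j+1}$ inclusive. The main obstacle I anticipate is the ordering claim: the combinatorial count is a routine application of a Fibonacci identity, but the Lucas inequality required to align ordering-by-largest-summand with ordering-by-value needs the careful telescoping sketched above, together with a separate check of the smallest values of $j$.
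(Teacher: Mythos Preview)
Your proposal is correct. The paper does not actually supply a proof of this lemma: it simply observes that Table~2 has the same structure as Table~1 in \cite{MG1} and invokes \cite[Lemma~3.1]{MG1}. The explicit argument you outline is precisely the argument that underlies that citation, and it is the direct analogue of the paper's own proof of Lemma~\ref{l1} (count the rows with a given largest summand via a Fibonacci identity/bijection, then sum and induct). One small point in your favor: you take care to justify the ordering step---that every partition with largest summand $L_{k+j}$ is numerically smaller than every partition with largest summand $L_{k+j+1}$---via the telescoping Lucas inequality, whereas the paper's proof of Lemma~\ref{l1} leaves this implicit in the phrase ``the ordering of the rows.'' So your write-up is, if anything, slightly more complete than what the paper (via its reference) provides.
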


\begin{lem}
For $j\ge 1$, we have
$$q_k(j+1)-q_k(j) \ =\ \begin{cases}L_{k+1}, & \mbox{ if }A \mbox{ is the }j\mbox{th character of }S,\\
L_{k+2}, &\mbox{ if }B \mbox{ is the }j\mbox{th character of }S.\end{cases}$$
\end{lem}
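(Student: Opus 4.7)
The plan is to mirror the proof of Lemma \ref{k1} very closely, with Lemma \ref{l2} playing the role that Lemma \ref{l1} played there. I would proceed by strong induction on $j$. The base cases ($j = 1, 2$) are immediate from Table 2: $q_k(2) - q_k(1) = L_{k+2}$ and $q_k(3) - q_k(2) = L_{k+1}$, which match the first two characters $BA$ of the golden string $S$.

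For the inductive step, assume the claim holds for all $j \le F_{m+1} - 1$ with some $m \ge 2$. By Lemma \ref{l2}, the rows of Table 2 with largest summand $L_{k+m+1}$ are exactly rows $F_{m+1}+1, \ldots, F_{m+2}$, a block of $F_m$ rows equal in number to the first $F_m$ rows (those whose largest summand is at most $L_{k+m-1}$). A glance at Table 2 shows that each row in the new block is obtained by appending the summand $L_{k+m+1}$ to the corresponding row among the first $F_m$ rows, giving
$$q_k(i+F_{m+1}) \ =\ q_k(i) + L_{k+m+1} \quad \text{for } 1 \le i \le F_m.$$
Taking differences cancels $L_{k+m+1}$, so the gaps for $F_{m+1}+1 \le j \le F_{m+2}-1$ coincide with the gaps for $1 \le i \le F_m - 1$. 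Because the golden string satisfies $S_{m+2} = S_{m+1} \circ S_m$, the characters in positions $F_{m+1}+1, \ldots, F_{m+2}$ reproduce the first $F_m$ characters of $S$, and the inductive hypothesis finishes this range.

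The only remaining index is the boundary $j = F_{m+1}$. Here $q_k(F_{m+1}+1) = L_k + L_{k+m+1}$, while $q_k(F_{m+1})$ is the largest element in the preceding block of rows having largest summand $L_{k+m}$, which the non-consecutiveness constraint forces to be $L_k + L_{k+2} + L_{k+4} + \cdots + L_{k+m}$ when $m$ is even and $L_k + L_{k+3} + L_{k+5} + \cdots + L_{k+m}$ when $m$ is odd. A short telescoping using $L_{r+1} = L_r + L_{r-1}$ then evaluates the difference to $L_{k+1}$ in the even case and $L_{k+2}$ in the odd case. Finally, Remark \ref{r1}(1) identifies the $F_{m+1}$-th character of $S$ as $A$ exactly when $m+1$ is odd (so $m$ is even) and $B$ exactly when $m+1$ is even (so $m$ is odd), which matches the claim.

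The step I expect to be the main obstacle is the boundary computation: identifying the correct maximal partition in each block and carrying the telescoping through to the clean values $L_{k+1}$ and $L_{k+2}$. Once this is in hand, everything else is indexing bookkeeping that parallels the proof of Lemma \ref{k1}.
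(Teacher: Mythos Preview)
Your proposal is correct and follows essentially the same approach as the paper's proof: induction along the Fibonacci-indexed blocks given by Lemma \ref{l2}, the self-similarity of the golden string to transfer the inductive hypothesis to the new block, and a direct telescoping computation at the boundary index. The only difference is a harmless index shift (your $m+1$ corresponds to the paper's $m$), so your boundary case $j=F_{m+1}$ and parity analysis match the paper's case $j=F_m$ exactly after this relabeling.
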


\begin{proof}
We prove by induction. \textit{Base cases:} it is easy to check that the statement of the lemma is true for $j$ such that $1\le j\le F_4-1$. \textit{Inductive hypothesis:} assume that it is true for $1\le j\le F_m-1$ for some $m\ge 4$. From Lemma \ref{l2}, the first $F_{m-1}$ rows of Table 2 are those for which the largest summand is no greater than $L_{k+m-2}$. Also, the rows for which $L_{k+m}$ is the largest summand are those numbered from $F_m+1$ to $F_{m+1}$ inclusive. Therefore, the ordering of the rows implies that 
$q_k(i+F_m) \ =\ q_k(i) + L_{k+m}$,
for $i = 1, 2, \ldots, F_{m-1}$. Hence, for $i = 1, 2, \ldots, F_{m-1}-1$, we have
\begin{align*}
    q_k(i+1+F_m) - q_k(i+F_m) &\ =\ (q_k(i+1)+L_{k+m}) - (q_k(i) + L_{k+m})\ =\ q_k(i+1) - q_k(i).
\end{align*}
By the construction of $S$, the substring comprising its first $F_{m-1}$ characters is identical to the substring of its characters numbered from $F_m+1$ to $F_{m+1}$ inclusive. Thus, the lemma is true for $F_m+1\le j\le F_{m+1}-1$. It remains to show that the lemma is true for $j = F_m$. We have
\begin{align*}
    q_k(F_m+1) - q_k(F_m)\ =\begin{cases}\ L_{k+m} - (L_{k+m-1}+L_{k+m-3} + \cdots + L_{k+3}) \ =\ L_{k+2},&\mbox{ if }m\mbox{ is even,}\\
   L_{k+m} - (L_{k+m-1}+L_{k+m-3} + \cdots + L_{k+2})\ =\ L_{k+1},&\mbox{ if }m\mbox{ is odd.}\end{cases}
\end{align*}
By Remark \ref{r1} item (1), we know that the lemma is true for $j = F_{m}$, completing the proof.  
\end{proof}

We are ready to prove Theorem \ref{m2}.

\begin{proof}[Proof of Theorem \ref{m2}] We consider three cases. 
\mbox{ }\newline

Case 1: $k = 0$. By Lemma \ref{k1}, we have $\mathcal{X}_0 \ =\ \{2+a(n)L_2 + b(n)L_3: n\ge 0\}$, where $a(n)$ and $b(n)$ denote the number of $A$'s and $B$'s, respectively, amongst the first $n$ characters in the golden string. Using Remark \ref{r1} item (2), we have
\begin{align*}
    \mathcal{X}_0 &\ =\ \left\{2+3\left(n-\left\lfloor \frac{n+1}{\Phi}\right\rfloor\right)+4\left\lfloor \frac{n+1}{\Phi}\right\rfloor\, :\, n\ge 0\right\}\ =\ \left\{2+3n+\left\lfloor \frac{n+1}{\Phi}\right\rfloor\,:\, n\ge 0\right\}.
\end{align*}
It is clear that $Z(0) = \mathcal{X}_0$; hence, the statement of the lemma is true when $k = 0$. 
\mbox{ }\newline

Case 2: $k = 1$. Using a similar reasoning as above, we have
\begin{align*}
    \mathcal{X}_1 &\ =\ \left\{1+L_2\left(n-\left\lfloor \frac{n+1}{\Phi}\right\rfloor\right)+L_3\left\lfloor \frac{n+1}{\Phi}\right\rfloor\, :\, n\ge 0\right\}\\
    &\ =\ \left\{1+3\left(n-\left\lfloor \frac{n+1}{\Phi}\right\rfloor\right)+4\left\lfloor \frac{n+1}{\Phi}\right\rfloor\, :\, n\ge 0\right\}\ =\ \left\{3n+\left\lfloor \frac{n+\Phi^2}{\Phi}\right\rfloor\, :\, n\ge 0\right\}.
\end{align*}
It is clear that $Z(1) = \mathcal{X}_1$; hence, the statement of the lemma is true when $k = 1$. 
\mbox{ }\newline

Case 3: $k\ge 2$. Using a similar reasoning as above, we have
\begin{align*}
    \mathcal{X}_k &\ =\ \left\{L_k+L_{k+1}\left(n-\left\lfloor \frac{n+1}{\Phi}\right\rfloor\right)+L_{k+2}\left\lfloor \frac{n+1}{\Phi}\right\rfloor\, :\, n\ge 0\right\}\\
    &\ =\ \left\{L_k\left(1+\left\lfloor \frac{n+1}{\Phi}\right\rfloor\right)+nL_{k+1}\, :\, n\ge 0\right\}\ =\ \left\{L_k\left\lfloor \frac{n+\Phi^2}{\Phi}\right\rfloor+nL_{k+1}\, :\, n\ge 0\right\}.
\end{align*}
If $k\ge 3$, the numbers in $\{L_0, L_1, \ldots, L_{k-2}\}$ are used to obtain the partitions of all integers for which the largest summand is no greater than $L_{k-2}$. In particular, such partitions generate all integers from $1$ to $L_{k-1}-1$ inclusive. Furthermore, such partitions can be appended to any partition having $L_k$ as its smallest summand to produce another partition. Therefore,
\begin{align*}
    Z(k) \ =\ \left\{L_k\left\lfloor \frac{n+\Phi^2}{\Phi}\right\rfloor+nL_{k+1} + j\, :\, n\ge 0\mbox{ and } 0\le j\le L_{k-1}-1\right\},
\end{align*}
as desired. It is easy to check that this formula is also true for $k = 2$. 
\end{proof}

\section{Proportion of Nonunique Partitions}
Let $c(N)$ count the number of numbers that are not uniquely represented in the Lucas sequence and are at most $N$. We want to show that $\displaystyle\lim_{N\rightarrow \infty}\frac{c(N)}{N} = \frac{1}{1+3\Phi}$, where $\Phi = \frac{1+\sqrt{5}}{2}$ is the golden ratio. Note that \cite[Lemma 3]{B2} says we can make the Lucas partition unique by requiring that not both $L_0$ and $L_2$ appear in the partition. Therefore, if a number has two partitions, then one of the partition starts with $L_0+L_2$. If we can characterize all of these numbers and find a formula for $c(N)$ in terms of $N$, we are done. Call the set of these numbers $K$. We form the following table listing all of such numbers in increasing order. Let $q_k(j)$ be the $jth$ smallest number in $K$. 

\begin{center}
\begin{tabular}{cccccccccccc}
 Row  &               &          &     &        &    &         &     &        &  &          &    \\
 \hline
1     &               & $L_0+L_2$    &     &        &    &         &     &        &  &         &     \\
2     &               & $L_0+L_2$    &     & $L_4$  &    &         &     &        &  &         &    \\
3     &               & $L_0+L_2$    &     &        &    & $L_5$   &     &        &  &         &     \\
$4$   &               & $L_0+L_2$    &     &        &    &         &     & $L_6$  &  &         &     \\
$5$   &               & $L_0+L_2$    &     & $L_4$  &    &         &     & $L_6$  &  &         &       \\
$6$   &               & $L_0+L_2$    &     &        &    &         &     &        &  &  $L_7$  &   \\
$7$   &               & $L_0+L_2$    &     & $L_4$  &    &         &     &        &  &  $L_7$  &     \\
$8$   &               & $L_0+L_2$    &     &        &    & $L_5$   &     &        &  &  $L_7$  &    
\end{tabular}
\end{center}

\begin{center}
Table 3. The partitions of the natural numbers having $L_0$ and $L_2$ as their smallest summands.  
\end{center}

Observe that Table 3 has the same structure as Table 1. Therefore, Lemma \ref{k1} applies with a change of index. In particular, we have the following.

\begin{lem}\label{k2}
For $j\ge 1$, we have
$$q_k(j+1)-q_k(j) \ =\ \begin{cases}L_{3}, & \mbox{ if }A \mbox{ is the }j\mbox{th character of }S,\\
L_{4}, &\mbox{ if }B \mbox{ is the }j\mbox{th character of }S.\end{cases}$$
\end{lem}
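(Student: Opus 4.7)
The plan is to reduce Lemma \ref{k2} to results already established for the tables in Section~4, exploiting the structural parallel that the paper explicitly highlights. Each row of Table~3 has the form $L_0 + L_2 + T$, where $T$ is a non-consecutive sum drawn from $\{L_4, L_5, L_6, \ldots\}$ (including the empty sum for row~1). Thus, up to the constant prefix $L_0 + L_2$, Table~3 enumerates in increasing order exactly the same sums that Table~2 with $k = 2$ enumerates with prefix $L_2$ alone. Subtracting the common prefix $L_2$ from Table~2 (for $k=2$) and the prefix $L_0 + L_2$ from Table~3 produces identical sequences, so the $j$-th row of Table~3 equals the $j$-th row of Table~2 (for $k=2$) plus the constant $L_0$.

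Since a constant shift preserves consecutive differences, one obtains $q_k(j+1) - q_k(j)$ in Table~3 as exactly the corresponding difference in Table~2 with $k=2$. The lemma stated immediately before the proof of Theorem~\ref{m2} then gives these differences as $L_{k+1} = L_3$ when the $j$-th character of $S$ is $A$ and $L_{k+2} = L_4$ when it is $B$, which is the conclusion of Lemma \ref{k2}.

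An equivalent, slightly more self-contained route would be to re-run the induction used for Lemma \ref{k1} directly on Table~3. One first establishes the analogue of Lemma \ref{l1}: for $j \ge 2$, the rows of Table~3 whose largest summand is $L_{j+2}$ are those numbered $F_j + 1$ to $F_{j+1}$. The inductive step for the differences then runs exactly as in Lemma \ref{k1}, using the recursion $F_m = F_{m-1} + F_{m-2}$ to match the top block of rows (obtained by appending $L_{m+2}$) with the initial $F_{m-1}$ rows, and computing the boundary difference at $j = F_m$ via a telescoping sum $L_{m+2} - (L_{m+1} + L_{m-1} + \cdots)$ that reduces to either $L_3$ or $L_4$, consistent with Remark \ref{r1}(1).

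The main obstacle in either approach is purely bookkeeping: one must verify that the row-ordering in Table~3 truly mirrors that of Table~1 or Table~2 after the index shift, so that Lemma \ref{l2} (or its Table~3 analogue) produces block sizes $F_j$ in the correct positions. Since the tables are constructed by listing partitions in order of largest summand and then recursively inside each block, this verification is immediate, and no genuinely new combinatorial input is needed beyond the material of Section~4.
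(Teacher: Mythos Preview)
Your proof is correct and follows essentially the same approach as the paper: the paper simply observes that ``Table 3 has the same structure as Table 1'' and asserts that Lemma~\ref{k1} ``applies with a change of index,'' giving no further argument. You make the reduction explicit by matching Table~3 instead to Table~2 with $k=2$ (which avoids the index shift and needs only the constant offset $L_0$), but the underlying idea---structural identification with a previously analyzed table, hence identical consecutive differences---is the same.
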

Therefore, we can write 
$$K \ =\ \left\{L_0+L_2 + a(n)L_3 + b(n)L_4: n\ge 0\right\},$$
where $a(n)$ and $b(n)$ denote the number of $A$'s and $B$'s, respectively, amongst the first $n$ characters in the golden string. Hence, 
\begin{align*}K &\ =\ \left\{5 + 4\left(n-\left\lfloor{\frac{n+1}{\Phi}}\right\rfloor\right)+7\left\lfloor{\frac{n+1}{\Phi}}\right\rfloor\,:\, n\ge 0 \right\}\ =\ \left\{5+4n+3\left\lfloor{\frac{n+1}{\Phi}}\right\rfloor\,:\, n\ge 0\right\}.\end{align*}

Now, we are ready to compute the limit. 

\begin{proof}[Proof of Theorem \ref{m3}]The number of integers with two partitions up to a number $N$ is exactly
$\#\left\{n\ge 0\,:\, 5+4n+3\left\lfloor{\frac{n+1}{\Phi}}\right\rfloor\le N\right\}$.
The number is found to be $\frac{N-1}{4+\frac{3}{\Phi}}$ within an error of at most $1$. Therefore, as claimed, the limit is 
$$\lim_{N\rightarrow \infty}\frac{1}{N}\frac{N-1}{4+\frac{3}{\Phi}}\ =\ \frac{1}{4+\frac{3}{\Phi}} \ =\ \frac{1}{1+3\Phi}.$$
\end{proof}

Among the first $N$ natural numbers, we see how $\alpha = \frac{1}{3\Phi + 1} \approx 0.17082$ estimates the proportion of natural numbers within this range that do not have unique non-consecutive partitions in the Lucas sequence.~The data we collect is shown in Table 4.
\begin{center}
    \begin{tabular}{|c|c|c|}
    \hline
      N & $c\left(N \right)$ & $\beta\left(N\right)$ \\
      \hline
      10 & 1 & 10.000 \% \\ \hline
      100 & 17 & 17.000\% \\ \hline
      1,000 & 171 & 17.100\% \\ \hline
      10,000 & 1,708 & 17.080\% \\ \hline
      $10^5$ & 17,082 & 17.082\%\\ \hline
      $10^6$ & 170,820 & 17.082\%\\ \hline
    \end{tabular}
\end{center}

\begin{center}
Table 4. Proportion $\beta\left(N\right)$ of the first $N$ natural numbers that do not have unique non-consecutive partitions in the Lucas sequence.
\end{center}

\appendix

\section{Java Code}\label{AppendixA}
The following is our Java code for calculating non-consecutive partitions of natural numbers in any infinite integer sequence given by a second-order linear recurrence.~It is available on github at
\sloppy \bburl{https://github.com/dluo6745/Zeckendorf-Partitions/blob/master/ZP.java}.~For each natural number $n$ from 1 to $N$, the code returns the non-consecutive partition(s) of $n$ as a list of integers that correspond to the indices of the terms in the second-order linear recurrence sequence we are enumerating.~Furthermore, the code also returns the number of natural numbers from 1 to $N$ that do not have unique non-consecutive partitions.

\section{Proofs of Lemmas}\label{AppendixB}
\begin{proof}[Proof of Lemma \ref{Lemma3}]
We proceed by strong induction.~The non-consecutive sums that we can form from $A_0$ are 0 and $L_{1}+1$ because the empty set results in a sum of 0 and the non-consecutive sums that we can form from $A_1$ are $0$, $L_1$, and $L_{2}-1$.~This shows the base case.~Assume Lemma \ref{Lemma3} holds for all non-negative integers less than or equal to $m = k$.~Without loss of generality, suppose that $k$ is odd.~To find the range of non-consecutive sums that we can form from $A_{k+1}$, we consider the subset $A_{k+1} - \{L_k\}$.~From our inductive hypothesis, the non-consecutive sums that we can form from $A_{k-1}$ are the values from 0 to $L_k+1$ inclusive, excluding $L_k$.~By adding $L_{k+1}$ to these values, we have the following non-consecutive sums that we can form from $A_{k+1}$ range from 0 to $L_{k+2}+1$ inclusive.

To show that $L_{k+2}$ cannot be formed as a non-consecutive sum of $A_{k+1}$, we first prove a general result.~Let $B$ be a non-consecutive subset of $A_{2j}$, where $j$ is a non-negative integer such that $2j < k$.~For sake of contradiction, suppose that the sum of the elements of $B$ is equal to $L_{2j+1}$.~In our first case, suppose that $L_{2j}$ is not in $B$.~This implies $B$ is a non-consecutive subset of $A_{2j-1}$ and that the sum of the elements of $B$ is less than or equal to $L_{2j+1}-1$ from our inductive hypothesis.~Hence, we have a contradiction which implies $B$ contains the term $L_{2j}$.~Consider the set $B'= B - \{L_{2j}\}$, which is a non-consecutive subset of $A_{2j-2}$.~Because the sum of the elements of $B'$ is equal to the difference between the sum of the elements of $B$ and $L_{2j}$, this implies that the sum of the elements of $B'$ is equal to $L_{2j-1}$, which cannot be formed as a non-consecutive sum of $A_{2j-2}$ by our inductive hypothesis.~Therefore, we have a contradiction and $L_{2j+1}$ cannot be formed as a non-consecutive sum of $A_{2j}$.

Applying this result to our inductive step, we have that $L_k$ cannot be formed as a non-consecutive sum of $A_{k-1}$.~This implies there is no possible way to form $L_{k+2} = L_{k} + L_{k+1}$ as a non-consecutive sum of $A_{k+1} - \{L_k\}$.~From our inductive hypothesis, the maximum possible sum we can form from $A_k$ is $L_{k+1}-1$, which is less than $L_{k+2}$.~Therefore, $L_{k+2}$ cannot be formed as a non-consecutive sum of $A_{k+1}$, completing the inductive step.
\end{proof}

\begin{proof}[Proof of Lemma \ref{Lemma5}]
It suffices to show that every natural number of the form $L_{2m+1}+1$ is equal to only one non-consecutive sum of $A_{2m}$.~We proceed by strong induction. Note that $L_{3}+1$ is equal to only one non-consecutive sum of $A_2$, and $L_{5}+1$ is equal to only one non-consecutive sum of $A_4$.~This shows the base case.~Assume Lemma \ref{Lemma5} holds for all non-negative integers less than or equal to $m = k$.~Let $B$ be a non-consecutive subset of $A_{2k+2}$.~For sake of contradiction, suppose that the sum of the elements of $B$ is equal to $L_{2k+3}+1$ and that $B$ does not contain the term $L_{2k+2}$.~From Lemma \ref{Lemma3} the non-consecutive sums that we can form from $A_{2k+2}$ are the values from 0 to $L_{2k+3}+1$ inclusive, excluding $L_{2k+2}$.~This implies $B$ is a non-consecutive subset of $A_{2k+1}$.~From Lemma \ref{Lemma3} we have that the sum of the elements of $B$ must be less than or equal to $L_{2k+2}-1$.~Hence we have a contradiction, which implies $B$ contains the term $L_{2k+2}$.~From our inductive hypothesis, we know that $L_{2k+1}+1$ is equal to only one non-consecutive sum of $A_{2k}$.~Because $L_{2k+3}+1 = L_{2k+2} + \left(L_{2k+1} +1\right)$ and $B$ cannot contain both $L_{2k+2}$ and $L_{2k+1}$, this implies $L_{2k+3}+1$ is equal to only one non-consecutive sum of $A_{2k+2}$.~This completes the inductive step.
\end{proof}



\newcommand{\etalchar}[1]{$^{#1}$}

\ \\

\noindent MSC2010: 11B39

\end{document}